\newtheorem{theorem}{Theorem}[section]   
\newtheorem{proposition}[theorem]{Proposition}
\newtheorem{corollary}[theorem]{Corollary}
\theoremstyle{definition}
\newtheorem{definition}[theorem]{Definition}
\newtheorem{example}[theorem]{Example}
\newtheorem{remark}[theorem]{Remark}
\newtheorem*{remark*}{Remark}
\newcommand{\R}{\mathbb{R}}
\begin{document}

\title{Weak rectangular diagrams, multi-crossing number, and arc index}

\author[T.Ito]{Tetsuya Ito}
\address{Department of Mathematics, Kyoto University, Kyoto 606-8502, JAPAN}
\email{tetitoh@math.kyoto-u.ac.jp}
\subjclass{57K10, 57M15}
\keywords{Arc index, $n$-crossing number, triple crossing number, weak rectangular diagram.}

\begin{abstract}
For a non-split multi-crossing diagram $D$ of a link $L$ we show that $\alpha(L)-2 \leq c_2(D) + \sum_{n> 2}(2n-4)c_n(D)$ holds. Here $\alpha(L)$ is the arc index and $c_n(D)$ is the number of $n$-crossings of $D$. This generalizes and subsumes many known inequalities related to multi-crossing numbers. In the course of proof, we introduce a notion of 
weak rectangular diagram and show that a weak rectangular diagram can be converted to usual rectangular diagram preserving its arc index.
\end{abstract}

\maketitle

\section{Introduction}

A \emph{multi-crossing diagram} $D \subset \R^{2}$ of a (unoriented) link $L$ is an image $p(L)$ of the projection $p:\R^{3} \rightarrow \R^{2}$ such that each of its singular point is an $n$-crossing point for some $n$, where $n$ depends on a singular point. Here an \emph{$n$-crossing point} is a multiplicity $n$ transverse singularity, which is formed by $n$ arc segments intersecting at one point so that any two of them form a transverse double point singularity.

For a multi-crossing diagram $D$ and $n\geq 2$, we denote by $c_n(D)$ the number of its $n$-crossing points. The \emph{total crossing number} $c_{tot}(D)=\sum_{n\geq 2}c_n(D)$ is the number of singular points (crossing points) of $D$.

We say that a multi-crossing diagram is an \emph{$n$-crossing diagram} if all the singular points are $n$-crossing points. The \emph{$n$-crossing number} $c_n(L)$ of a link $L$ is the minimum number of $n$-crossing points among all $n$-crossing diagrams of $L$. The $n$-crossing number was introduced in \cite{ad1}, where it was shown that every link admits an $n$-crossing diagram. The $2$-crossing number $c_2(L)$ is usually called the (minimum) \emph{crossing number} and denoted by $c(L)$. This is one of the most elementary, but theoretically hard to treat invariant for which there are many open problems.

The aim of this paper is to prove the following inequality. Here we say that a multi-crossing diagram is \emph{non-split} if it is connected as a subset of $\R^{2}$.

\begin{theorem}
\label{theorem:main}
For a link $L$ and its non-split multi-crossing diagram $D$, 
\[\alpha(L)-2 \leq c_2(D) + \sum_{n>2}(2n-4) c_n(D)\]
holds. In particular, for a non-split link $L$ and $n>2$,
\[ \alpha(L) -2 \leq (2n-4)c_n(L)\]
holds.
\end{theorem}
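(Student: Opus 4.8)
The plan is to reduce the inequality to a statement purely about rectangular diagrams. Recall that the arc index $\alpha(L)$ equals the minimal grid number among rectangular diagrams representing $L$, that is, the minimal number of horizontal arcs (equivalently of vertical arcs, equivalently half the number of corners) in such a diagram. Since the arc index is a minimum, any single rectangular diagram of $L$ yields an upper bound on $\alpha(L)$. Thus it suffices to exhibit, from the given non-split multi-crossing diagram $D$, one rectangular diagram of $L$ whose number of horizontal arcs is at most $2 + c_2(D) + \sum_{n>2}(2n-4)c_n(D)$.

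Rather than attempt an honest grid all at once, I would first build a \emph{weak} rectangular diagram $R$ from $D$ crossing by crossing; this is precisely the role of the weak rectangular diagram, which is flexible enough to absorb a multi-crossing locally while still carrying a well-defined arc count. At a double crossing the over-strand is routed vertically and the under-strand horizontally, contributing a single arc. At an $n$-crossing with $n>2$ the $n$ strands are linearly ordered by height: the topmost strand stays over every other and the bottom strand stays under every other, so each merely extends an adjacent arc, whereas each of the remaining $n-2$ intermediate strands is forced to pass over the lower strands and under the higher ones. In the rectangular model such a strand cannot be a single segment; organizing its over/under switch so that it contributes exactly two arcs gives the local cost $2(n-2)=2n-4$. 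Summing the local contributions and then closing the diagram up globally, where the connectedness (non-splitness) of $D$ pins the additive contribution to exactly $2$ via an Euler-characteristic count of the regions (a connected diagram has $2+\sum_{n\ge 2}(n-1)c_n$ regions, matching the classical $c+2$ when all crossings are double), should produce a weak rectangular diagram with the desired arc bound.

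Next I would invoke the conversion result announced in the abstract: a weak rectangular diagram can be turned into a genuine rectangular diagram of the same link with the same arc index. Feeding $R$ into this conversion yields an honest rectangular diagram of $L$ with at most $2 + c_2(D) + \sum_{n>2}(2n-4)c_n(D)$ arcs, hence $\alpha(L) \le 2 + c_2(D) + \sum_{n>2}(2n-4)c_n(D)$, which is the first inequality. For the second statement I would apply the first to a minimal $n$-crossing diagram $D$ of $L$, which for a non-split link may be taken non-split, with $c_n(D)=c_n(L)$ and $c_m(D)=0$ for $m\ne n$; the first inequality then specializes to $\alpha(L)-2 \le (2n-4)c_n(L)$.

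The main obstacle is the middle step: pinning down the local model at an $n$-crossing so that it contributes exactly $2n-4$ arcs and no more, while ensuring the pieces glue into a single weak rectangular diagram whose global closing costs only the additive $2$. The delicate points are that the over/under behaviour forced on each intermediate strand cannot be avoided, that the height orders at different crossings must be realized consistently across the whole diagram, and above all that the passage from a weak rectangular diagram to an honest one genuinely preserves the arc count. Establishing this last conversion is where I expect most of the technical work to lie.
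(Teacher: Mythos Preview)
Your strategy is the paper's: manufacture from $D$ a weak rectangular diagram with at most $2+c_2(D)+\sum_{n>2}(2n-4)c_n(D)$ vertical arcs, then invoke the conversion theorem (Theorem~\ref{theorem:weak}) to bound $\alpha(L)$, and specialize to a minimal $n$-crossing diagram for the second inequality.

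The paper resolves your acknowledged ``middle step'' by a device you do not mention: it first puts $D$ into a \emph{sliced position}, with the crossings separated by $x$-coordinate so that each vertical strip $D_i$ contains exactly one elementary $n$-crossing tangle together with trivial $(1,1)$-strands. The key combinatorial lemma (Proposition~\ref{prop:slice-reduced}) is that for a non-split diagram one can reorder the slices so that only the two extremal strips $D_1$ and $D_{c_{tot}(D)}$ are of ``type~$0$'' (all tangle endpoints on one side). The two caps at the ends then account for the additive $+2$, and each interior slice contributes at most $u_n$ vertical arcs, where $u_n$ is the worst-case cost of a non-type-$0$ elementary $n$-tangle; the bound $u_n\le 2n-4$ for $n>2$ is proved by induction, adding the topmost strand at a cost of at most two vertical segments, with base case $u_3=2$ checked by hand. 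This is close in spirit to your ``each intermediate strand costs two'' heuristic, though not literally the same accounting.

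One caution: your Euler-characteristic remark does not actually justify the $+2$. The region count $2+\sum_{n\ge2}(n-1)c_n(D)$ is a different quantity from the arc bound $2+c_2(D)+\sum_{n>2}(2n-4)c_n(D)$ (they already disagree for $n\ge 4$), so the analogy is only suggestive. In the paper the $+2$ is genuinely geometric, coming from the two extremal caps once the type-$0$ slices have been pushed to the ends, and the non-splitness hypothesis is used precisely in that reordering argument rather than via an Euler count.
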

Here $\alpha(L)$ is the arc index of $L$. We review the definition in detail in Section \ref{section:arc-index}.

This theorem subsumes or generalizes many known inequalities.
By the Morton-Beltrami inequality \cite{mb} $\mathrm{Breadth}_a F_L(a,z) \leq \alpha(L)-2$ which relates arc index and the Kauffman polynomial $F_L(a,z)$, we get the following.
\begin{corollary}
\label{cor:main}
For a non-split link $L$ and $n>2$
\[\mathrm{Breadth}_a F_L(a,z) \leq (2n-4)c_n(L) \]
holds. 
\end{corollary}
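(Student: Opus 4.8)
The plan is to prove Theorem~\ref{theorem:main} and then deduce Corollary~\ref{cor:main} almost immediately. Looking at the corollary in isolation, the key observation is that it is essentially a direct combination of two inputs: the main theorem specialized to $n$-crossing diagrams, and the cited Morton--Beltrami inequality. So the real mathematical work lives in Theorem~\ref{theorem:main}, and the corollary is a short chain of inequalities.

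\medskip

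\noindent\textbf{Proof strategy for the corollary.}
First I would take a non-split link $L$ and fix $n>2$. The plan is to invoke the ``in particular'' clause of Theorem~\ref{theorem:main}, which gives
\[
\alpha(L)-2 \leq (2n-4)c_n(L).
\]
Next I would apply the Morton--Beltrami inequality \cite{mb}, which states that $\mathrm{Breadth}_a F_L(a,z) \leq \alpha(L)-2$. Chaining these two inequalities yields
\[
\mathrm{Breadth}_a F_L(a,z) \;\leq\; \alpha(L)-2 \;\leq\; (2n-4)c_n(L),
\]
which is exactly the claimed bound. The one point requiring care is the non-split hypothesis: Morton--Beltrami is classically stated for links and I would check that it applies (or holds with the stated form) for the non-split links under consideration, and that the specialized form of Theorem~\ref{theorem:main} indeed requires only non-splitness rather than connectivity of a particular diagram. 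Since the ``in particular'' statement of the theorem is phrased for non-split $L$, these hypotheses match up cleanly.

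\medskip

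\noindent\textbf{The real obstacle: Theorem~\ref{theorem:main}.}
The substance of the argument is the main theorem, not the corollary. The stated proof technique is to introduce \emph{weak rectangular diagrams} and show that such a diagram can be converted to an honest rectangular diagram while preserving arc index. I expect the heart of the proof to be a combinatorial construction: given a non-split multi-crossing diagram $D$, one must build from it a weak rectangular diagram whose number of vertical (or horizontal) arcs is controlled by the right-hand side $c_2(D) + \sum_{n>2}(2n-4)c_n(D)$. The weight $2n-4$ attached to each $n$-crossing (for $n>2$) strongly suggests that resolving a single $n$-crossing point into the grid structure costs $2n-4$ arcs, whereas an ordinary double point ($n=2$) costs only $1$; verifying precisely this accounting, and checking that no arcs are double-counted where crossings meet shared strands, is where I anticipate the main difficulty. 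The non-split (connected) hypothesis is what guarantees the resulting grid object is a single rectangular diagram rather than a disjoint union, and hence that its arc count directly bounds $\alpha(L)$.

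\medskip

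\noindent In summary, once Theorem~\ref{theorem:main} is established, the corollary follows by composing the theorem's specialized inequality with Morton--Beltrami; the only subtlety is matching the non-split hypotheses, and the genuine work is entirely in the theorem's grid-diagram conversion and its arc-counting bookkeeping.
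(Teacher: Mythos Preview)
Your approach is exactly the paper's: the corollary is deduced immediately from the ``in particular'' clause of Theorem~\ref{theorem:main} together with the Morton--Beltrami inequality $\mathrm{Breadth}_a F_L(a,z)\le \alpha(L)-2$, and all the real content is deferred to the proof of Theorem~\ref{theorem:main} via weak rectangular diagrams. Your sketch of that proof (building a weak rectangular diagram from $D$ with the $2n-4$ cost per $n$-crossing) also matches the paper's strategy.
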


Since $\mathrm{Breadth}_a F_L(a,z) = c(L)$ for non-split alternating links \cite{th}, we get the following fundamental result of arc index.

\begin{corollary}\cite{bp,noy}
\label{cor:arc-index-alternating}
For a non-split alternating link $L$, $c(L)=\alpha(L)-2$.
\end{corollary}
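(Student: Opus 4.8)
The plan is to obtain the equality by sandwiching $\alpha(L)-2$ between $c(L)$ on both sides, so the work splits into an upper bound and a lower bound for $c(L)$ in terms of the arc index.

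First I would establish $c(L) \leq \alpha(L)-2$ by chaining the two polynomial facts already quoted in the text, and this direction does not use Theorem \ref{theorem:main} at all. Thistlethwaite's identity \cite{th} gives $\mathrm{Breadth}_a F_L(a,z) = c(L)$ for a non-split alternating link, while the Morton--Beltrami inequality \cite{mb} gives $\mathrm{Breadth}_a F_L(a,z) \leq \alpha(L)-2$ for any $L$. Combining them immediately yields $c(L) = \mathrm{Breadth}_a F_L(a,z) \leq \alpha(L)-2$, where the alternating and non-split hypotheses enter only through Thistlethwaite's identity.

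For the reverse inequality $\alpha(L)-2 \leq c(L)$ I would apply Theorem \ref{theorem:main} to a minimal ordinary (double-crossing) diagram $D$ of $L$, that is, a diagram realizing $c(L)$ with only $2$-crossings. Such a $D$ has $c_2(D)=c(L)$ and $c_n(D)=0$ for every $n>2$, so the right-hand side of the main inequality collapses to $c_2(D)=c(L)$, giving exactly $\alpha(L)-2 \leq c(L)$. The one hypothesis that must be checked is that $D$ may be taken \emph{non-split} in the sense of the paper, i.e. connected as a subset of $\R^2$, since that is what Theorem \ref{theorem:main} requires; this is where the non-split assumption on $L$ is used, because a disconnected planar diagram always represents a split link, so a minimal-crossing diagram of a non-split link is automatically connected.

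Putting the two inequalities together gives $c(L) \leq \alpha(L)-2 \leq c(L)$, which forces equality throughout and hence $c(L)=\alpha(L)-2$. I expect no genuine obstacle here: the heavy lifting is entirely contained in Theorem \ref{theorem:main} and in the cited breadth results, and the remaining work is the substitution $c_2(D)=c(L),\ c_n(D)=0$ together with the bookkeeping remark that a minimal diagram of a non-split link is connected, so that Theorem \ref{theorem:main} indeed applies.
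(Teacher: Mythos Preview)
Your proof is correct and follows essentially the same route as the paper. The paper obtains $c(L)\leq\alpha(L)-2$ by combining the Morton--Beltrami inequality with Thistlethwaite's identity, and obtains $\alpha(L)-2\leq c(L)$ as the $n=2$ case of Theorem~\ref{theorem:main} applied to a minimal (hence connected) diagram; your write-up makes the same two moves, with the only difference being that you spell out explicitly why a minimal diagram of a non-split link is connected.
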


Note that $c(L) \leq \frac{n(n-1)}{2}c_n(L)$ holds, since an $n$-crossing point can be decomposed with a $2$-crossing diagram with $\frac{n(n-1)}{2}$ $2$-crossings. By Corollary \ref{cor:main} and Corollary \ref{cor:arc-index-alternating}, we get the following better bound of $n$-crossing numbers for alternating links.

\begin{corollary}
\label{cor:alternating2}
For a non-split alternating link $L$ and $n>2$, $c(L) \leq (2n-4)c_n(L)$.
\end{corollary}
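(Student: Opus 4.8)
The plan is to obtain this as an immediate specialization of Corollary~\ref{cor:main} using the known identity for the Kauffman polynomial of alternating links. The essential analytic content has already been established: Theorem~\ref{theorem:main} controls the arc index in terms of multi-crossing data, and the Morton--Beltrami inequality has been invoked to pass from $\alpha(L)-2$ to $\mathrm{Breadth}_a F_L(a,z)$, yielding Corollary~\ref{cor:main} in the form $\mathrm{Breadth}_a F_L(a,z) \leq (2n-4)c_n(L)$ for every non-split link $L$ and every $n>2$. So the work here is purely to insert the sharp evaluation of the breadth available in the alternating case.

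Concretely, I would proceed in two short steps. First, recall Thistlethwaite's result \cite{th} that for a non-split alternating link the $a$-breadth of the Kauffman polynomial computes the crossing number exactly, that is, $\mathrm{Breadth}_a F_L(a,z) = c(L)$. (This is the same input already used to derive Corollary~\ref{cor:arc-index-alternating}, where it was combined with the arc-index equality; here it is combined instead with the multi-crossing bound.) Second, I would substitute this identity into Corollary~\ref{cor:main}, obtaining
\[
c(L) \;=\; \mathrm{Breadth}_a F_L(a,z) \;\leq\; (2n-4)\,c_n(L),
\]
which is exactly the asserted inequality.

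There is no genuine obstacle: the statement is a direct consequence of results proved earlier in the paper together with a single citation. The only point worth flagging for the reader is comparative rather than logical, namely that this bound improves on the elementary estimate $c(L)\leq \tfrac{n(n-1)}{2}c_n(L)$ coming from resolving each $n$-crossing into $\tfrac{n(n-1)}{2}$ double points, since $2n-4 < \tfrac{n(n-1)}{2}$ for all $n>2$. I would therefore keep the proof to a couple of lines, emphasizing that the substance lies in Theorem~\ref{theorem:main} and its Corollary~\ref{cor:main}, and that the alternating hypothesis enters only through the exact evaluation $\mathrm{Breadth}_a F_L(a,z)=c(L)$.
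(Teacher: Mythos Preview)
Your proposal is correct and matches the paper's own argument: the paper derives Corollary~\ref{cor:alternating2} in one line from Corollary~\ref{cor:main} together with the identity $\mathrm{Breadth}_a F_L(a,z)=c(L)$ for non-split alternating links (packaged there as Corollary~\ref{cor:arc-index-alternating}). The only cosmetic difference is that the paper cites Corollary~\ref{cor:arc-index-alternating} rather than re-invoking Thistlethwaite directly, but the logical content is identical.
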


This subsumes the inequality $\frac{1}{2}c(K) \leq c_3(K)$ for the case $n=3$  \cite{ad1} and $\frac{1}{4}c(K) \leq c_4(K)$ for the case $n=4$ \cite{ad2}.
The proof of these inequalities in \cite{ad1,ad2} are based on a computation of Kauffman bracket. In \cite{ad4}, the Kauffman bracket bound is proven for general $n$, where it is shown that 
\begin{equation}
\label{eqn:Adams-bound} \mathrm{Breadth}\, \langle L \rangle \leq \left(\left\lfloor \frac{n^2}{2} \right\rfloor +4n-8\right)c_n(L)
\end{equation}
holds for $n \geq 3$. Here $\langle L \rangle$ denotes the Kauffman bracket of a (framed) link $L$. Since $\mathrm{Breadth}\,\langle L \rangle = 4c(L)$ for a non-split alternating link, this implies that
\begin{equation}
\label{eqn:Adams-bound2} c(L) \leq \frac{1}{4}\left(\left\lfloor \frac{n^2}{2} \right\rfloor +4n-8\right)c_n(L) 
\end{equation}
holds for an alternating non-split link and $n>2$. 
Although Corollary \ref{cor:main} and the inequality \eqref{eqn:Adams-bound}
are independent, our Corollary \ref{cor:alternating2} gives an improvement of \eqref{eqn:Adams-bound2}.

For the braid index $b(L)$ of $L$, since $2b(L) \leq \alpha(L)$ holds \cite{cr}, we get the following.

\begin{corollary}
For a non-split link $L$, $b(L) \leq \frac{c_2(L)}{2}+1$ and $b(L) \leq (n-2)c_n(L)+1$ hold. 
\end{corollary}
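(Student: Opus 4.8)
The plan is to combine the two inequalities of Theorem \ref{theorem:main} with the bound $2b(L) \leq \alpha(L)$ of \cite{cr}, which is already available to us. Both desired bounds will then follow by elementary arithmetic, so the real content lies only in selecting the right diagram to feed into the theorem and in checking a mild connectivity hypothesis.

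For the second inequality, I would simply invoke the \emph{in particular} clause of Theorem \ref{theorem:main}: for a non-split link $L$ and $n > 2$ we have $\alpha(L) - 2 \leq (2n-4)c_n(L)$, hence $\alpha(L) \leq (2n-4)c_n(L) + 2$. Substituting into $2b(L) \leq \alpha(L)$ and dividing by $2$ gives $b(L) \leq (n-2)c_n(L) + 1$.

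For the first inequality I would apply the full statement of Theorem \ref{theorem:main} to a \emph{minimal crossing diagram} $D$ of $L$, that is, an ordinary ($2$-crossing) diagram with $c_2(D) = c(L) = c_2(L)$ and $c_n(D) = 0$ for all $n > 2$. The one point requiring a moment's care --- and the only genuine obstacle --- is that Theorem \ref{theorem:main} demands that $D$ be non-split as a subset of $\R^2$. I would justify this by recalling the standard fact that a minimal crossing diagram of a non-split link is necessarily connected: were it disconnected, its two pieces would lie in disjoint disks of the plane, exhibiting $L$ as a split link. With $D$ thus non-split, the right-hand side of the theorem reduces to $c_2(D) = c_2(L)$, giving $\alpha(L) - 2 \leq c_2(L)$, so $\alpha(L) \leq c_2(L) + 2$; combining with $2b(L) \leq \alpha(L)$ then yields $b(L) \leq \tfrac{c_2(L)}{2} + 1$.

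Since both parts are immediate consequences of results already in hand, I expect no serious difficulty; the argument is a formal corollary rather than a fresh proof. The only place where one could slip is in overlooking the non-splitness hypothesis of Theorem \ref{theorem:main}, which is precisely why I would make the connectivity of the minimal diagram explicit before applying it.
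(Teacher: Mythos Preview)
Your proposal is correct and matches the paper's approach: the paper simply cites $2b(L)\le\alpha(L)$ from \cite{cr} and reads off the corollary from Theorem~\ref{theorem:main}. Your explicit verification that a minimal $2$-crossing diagram of a non-split link is connected is a welcome detail the paper leaves implicit.
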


This generalizes the inequalities for the case $n=2$ \cite{oh} and the case $n=3$ \cite{ni}.

Finally, the \emph{\"{u}ber crossing number} $\textit{\"u}\,(L)$ of a link $L$ is the minimum $n$ such that $L$ admits an $n$-crossing diagram with $c_n(D)=1$ \cite{ad3}. Theorem \ref{theorem:main} leads to the following.
\begin{corollary}\cite[Corollary 3.5]{ad3}
$\frac{1}{2}(\alpha(L)+2) \leq \text{\"u}\,(L)$ 
\end{corollary}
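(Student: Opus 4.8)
The plan is to apply Theorem \ref{theorem:main} directly to the diagram that realizes the \"uber crossing number, so that the corollary becomes a one-line substitution. Set $n=\text{\"u}(L)$. By definition $L$ admits an $n$-crossing diagram $D$ with a single singular point, that is, $c_n(D)=1$ and $c_m(D)=0$ for every $m\neq n$.

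Before invoking the theorem I would verify that such a $D$ satisfies the hypothesis of Theorem \ref{theorem:main}, namely that it is non-split as a subset of $\R^2$. This is immediate: since $D$ has only one crossing point and every arc segment of $D$ passes through it, the image $D$ is connected. Hence $D$ is a non-split multi-crossing diagram of $L$ and the inequality of Theorem \ref{theorem:main} is available.

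Assuming $n>2$, substituting the crossing counts of $D$ into Theorem \ref{theorem:main} gives
\[
\alpha(L)-2 \;\leq\; c_2(D)+\sum_{m>2}(2m-4)c_m(D)\;=\;(2n-4)\cdot 1\;=\;2\,\text{\"u}(L)-4,
\]
and rearranging yields $\alpha(L)+2\leq 2\,\text{\"u}(L)$, i.e. $\tfrac{1}{2}(\alpha(L)+2)\leq \text{\"u}(L)$, as claimed. The one genuinely delicate point is not the computation but the degenerate case $n=2$, where the theorem only returns $\alpha(L)-2\leq 1$ rather than the sharper bound the corollary demands. I expect this to be the main (and only) obstacle, and I would dispose of it separately: a diagram with a single double point represents the unknot, for which $\alpha(L)=2$ and $\text{\"u}(L)=2$ so that $\tfrac{1}{2}(\alpha(L)+2)=2=\text{\"u}(L)$ holds directly. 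With this edge case handled, the corollary follows for all $L$.
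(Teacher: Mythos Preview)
Your proof is correct and matches the paper's approach exactly: the paper derives this corollary directly from Theorem~\ref{theorem:main} without further detail, and your substitution into the inequality is precisely how that derivation goes. Your extra attention to connectedness and to the $n=2$ boundary case goes beyond what the paper spells out; note only that your claim ``a diagram with a single double point represents the unknot'' tacitly assumes $L$ is a knot (a single double point can also represent the two-component unlink), which is consistent with the scope of \cite{ad3}.
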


Our proof is based on a notion of a \emph{weak rectangular diagram}, a slight generalization of a rectangular diagram which is interesting in its own right (see Definition \ref{definition:weak}). In Theorem \ref{theorem:weak} we show that a weak rectangular diagram can be converted to an honest rectangular diagram without changing its arc index, the number of vertical segments. Since weak rectangular diagrams are more flexible, it helps us to estimate the arc index. In particular, weak rectangular diagram are useful for investigating the relations between the arc index and other link invariants, as our Theorem \ref{theorem:main} demonstrates.

\section{weak rectangular diagram and arc index}

\subsection{Three views of arc index}\label{section:arc-index}
We quickly review three mutually related but slightly different expressions of knots and links that appear in the definition of the arc index.
The relations are summarized in Figure \ref{fig:arc-presentation}.

\begin{figure}[htbp]
\begin{center}
\includegraphics*[width=100mm]{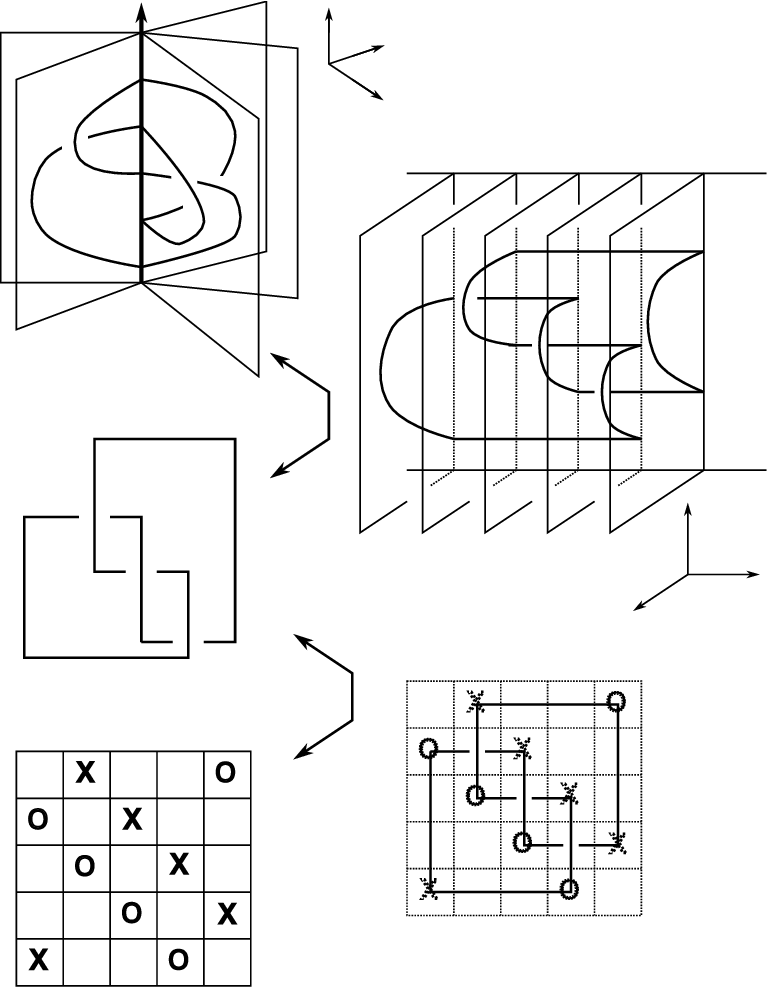}
\begin{picture}(0,0)
\put(-305,225){Arc presentation}
\put(-305,105){Rectangular diagram}
\put(-305,-12){Grid diagram}
\put(-155,325){\small $x$}
\put(-155,350){\small $y$}
\put(-162,360){\small $z$}
\put(-15,145){\small $x$}
\put(-27,175){\small $y$}
\put(-45,135){\small $z$}
\end{picture}
\medskip \medskip
\end{center}
\caption{Arc presentation, rectangular diagram and grid diagram} 
\label{fig:arc-presentation}
\end{figure} 

Although it is a striking feature that these actually represent \emph{Legendrian links}, we do not discuss this issue. We refer to \cite{nt} for the relation to contact geometry.

\subsubsection{Arc presentation}

Take a cylindrical coordinate $(r,\theta,z)$ of $\R^{3}$ which we regard as an 
open book decomposition of $\R^{3}$. By a \emph{page} at $\theta \in [-\pi,\pi)$, we  mean the half-plane $H_{\theta} =\{(r\cos\theta,r\sin \theta,z) \in \R^{3} \: | \: r\geq 0, z \in \R\}$, and by the \emph{binding} we mean the $z$-axis, viewed as the boundary of a page $H_{\theta}$.

\begin{definition}[Arc presentation]
An \emph{arc presentation} $\mathcal{A}$ of a link $L$ is a decomposition of $L$ into simple arcs in a page $H_{\theta}$ connecting two different points of the binding, so that each page $H_{\theta}$ contains at most one arc. We denote by $\alpha(\mathcal{A})$ the number of arcs of arc presentation $\mathcal{A}$ and call it the \emph{arc number}.
\end{definition}

An arc presentation appeared in \cite{bm} in their study of braid index of satellite links, and its fundamental properties are studied in \cite{cr}. 

\subsubsection{Rectangular diagram}

An arc presentation can be illustrated in a slightly different manner, as we will explain shortly.

\begin{definition}[Rectangular diagram]
\label{definition:rectangular}
A \emph{rectangular diagram} $R$ is a (piecewise linear) link diagram that consists of horizontal arcs and vertical arcs such that vertical arcs always appear as an over arc at each crossing points. We denote by $\alpha(R)$ the number of vertical segments of $R$.
\end{definition}

A rectangular diagram is useful for algorithmic treatment of knots and links as discussed in \cite{dy}. Furthermore, a rectangular diagram can be used to distinguish Legendrian links by a geometric method \cite{dp}.

\subsubsection{Grid diagram}

Finally, in a combinatorial reconstruction of knot Fleor homology called grid homology theory \cite{most,oss}, the following more combinatorial presentation of rectangular diagram is used.
 
\begin{definition}[Grid diagram]
A \emph{grid diagram} $\mathbb{G}$ of grid number $n=\alpha(\mathbb{G})$ is an $n \times n$ square grids such that there are $n$ squares marked by the symbol $X$ and $n$ squares marked by the symbol $O$, such that 
\begin{itemize}
\item No square is marked by both $X$ and $O$.
\item Each row contains exactly one $X$ and exactly one $O$.
\item Each column contains exactly one $X$ and exactly one $O$.
\end{itemize}
\end{definition}

For a grid diagram $\mathbb{G}$ one assigns a rectangular diagram $R_{\mathbb{G}}$ by taking a vertical and horizontal lines connecting $X$ and $O$ markings, so that vertical arc always lies above of horizontal arcs. Furthermore, we may naturally assign the orientation by the rule that horizontal arcs are oriented from $O$ to $X$ and vertical arcs are oriented from $X$ to $O$, although in this paper we do not take into account of orientations.

Thus a grid diagram is naturally regarded as a rectangular diagram. Conversely, by adjusting the coordinate of vertical and horizontal segments, a rectangular diagram is easily converted to the form $R_{\mathbb{G}}$. Hence, grid diagrams and rectangular diagrams are equivalent notions.

However, it is important to note that in grid homology theory, its $n\times n$ grid plays an important role so the grid diagram has its own meaning and importance.

\subsection{From arc presentation to rectangular diagram and back}
\label{section:Arc-to-rectangle}

Let $\mathcal{A}$ be an arc presentation of a link $L$ with arc number $n$.
By `blowing-up' the axis, the arc presentation can be converted to a rectangular diagram with $n$ vertical and horizontal arcs.

More precisely, we assume that for the arc presentation $\mathcal{A}$, the page $H_\pi$ contains no arcs hence we view $\mathcal{A}$ as a union of open arcs in $\textrm{Int}(H_\theta) \subset \R^{3} \setminus H_{\pi}$.
Let $f$ be a homeomorphism 
\[ f: \R^{3} \setminus H_\pi \rightarrow (-\pi,\pi)\times \R \times \R_{>0} ,\quad f(r\cos\theta, r \sin \theta,z)=(\theta, z,r).\]
Then $f$ sends an open arc in the page $H_{\theta}$ to an open arc in the open half-plane $\{(x,y,z) \in \R^{3} \: | \: x= \theta, z>0 \}$.

By connecting endpoints of open arcs by the horizontal lines in the $xy$-plane $H=\{(x,y,z) \in \R^{3} \: | \: z=0 \}$, we get a link $L_{\mathcal{A}}$ in $(0,2\pi)\times \R \times \R_{>0} \subset \R^{3}$ isotopic to the link $L$.
By construction, the projection of $L_{\mathcal{A}}$ to the $xy$-plane gives the rectangular diagram with $n$ vertical arcs.

Conversely, for a rectangular diagram $R$ of a link $L$, we put $L$ so that horizontal arcs sit on the $xy$-plane $H=\{(x,y,z) \in \R^{3} \: | \: z=0 \}$, and that each vertical arc $\gamma$ sits on the half-plane $H_{c}=\{(x,y,z) \in \R^{3} \: | \: x=c, z\geq 0 \}$ for some $c \in (-\pi,\pi)$.
Here by abuse of notation, by `horizontal/vertical' arc of a link $L$, we mean a part of link that projects to the horizontal/vertical arc in the rectangular diagram. 

Then by collapsing the $xy$-plane $H$ into the binding, we get the arc presentation. More precisely, by sending the link $L$ by
\[ f^{-1}: (-\pi,\pi)\times \R \times \R_{>0} \rightarrow  \R^{3} \setminus H_\pi \subset \R^{3} \]
we get an arc presentation of the link $L$.

Summarizing, we have three equivalent, but slightly different expressions of links; arc presentation, rectangular diagram, and grid diagram. The arc index is the minimum complexity of such a presentation.

\begin{definition}[Arc index]
The arc index $\alpha(L)$ is defined by
\begin{align*}
\alpha(L) & = \min \{\alpha(\mathcal{A}) \: | \: \mathcal{A} \textnormal{ is an arc presentation } L \} \\
&= \min\{ \alpha(R) \:| \: R  \textnormal{ is a rectangular diagram of } L\}\\
&= \min\{\alpha(\mathbb{G}) \: | \: \mathbb{G}  \textnormal{ is a grid diagram of } L\}
\end{align*}
\end{definition}

\subsection{weak rectangular diagram}

For rectangular diagrams (Definition \ref{definition:rectangular}) it is required that vertical segments always lies above of horizontal segments.
We weaken this requirement to introduce the following.

\begin{definition}[weak rectangular diagram]
\label{definition:weak}
A link diagram $D$ is a \emph{weak rectangular diagram} if it consists of horizontal and vertical arcs such that, each vertical arc contains only over-crossings or under-crossings.
\end{definition}

As in the rectangular diagram, we denote by $\alpha(R)$ the number of vertical arc segments of the weak rectangular diagram $R$. 
A horizontal arc of a weak rectangular diagram may contain both over-crossing and under-crossings (see Figure \ref{fig:weak} (i)).

For a weak rectangular diagram, we say that a vertical segment $\gamma$ is an \emph{over} (resp. \emph{under}) segment if $\gamma$ contains only over-crossings (resp. under-crossings). If $e$ contains no crossing, by convention, we regard $e$ as an under segment.

\begin{figure}[htbp]
\begin{center}
\includegraphics*[width=100mm]{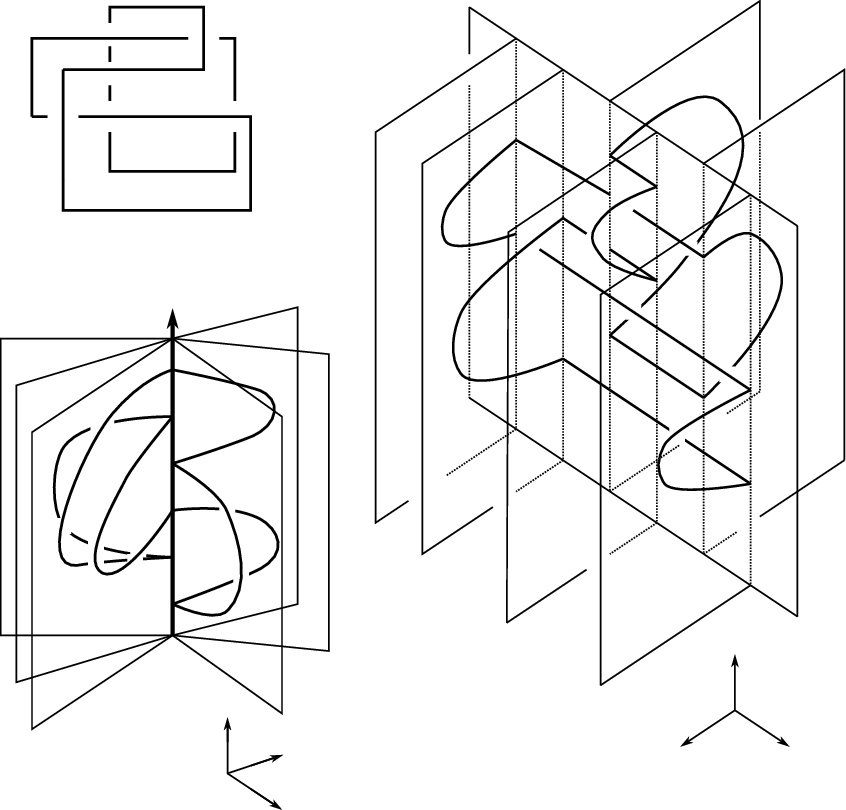}
\begin{picture}(0,0)
\put(-300,265){(i)}
\put(-165,265){(ii)}
\put(-300,175){(iii)}
\put(-200,-5){\small $x$}
\put(-198,21){\small $y$}
\put(-218,24){\small $z$}
\put(-22,25){\small $x$}
\put(-38,48){\small $y$}
\put(-65,25){\small $z$}
\put(-12,235){\small $a$}
\put(-39,255){\small $b$}
\put(-157,105){\small $c$}
\put(-142,95){\small $d$}
\put(-112,72){\small $e$}
\put(-80,55){\small $f$}
\put(-282,150){\small $a$}
\put(-278,135){\small $b$}
\put(-275,118){\small $c$}
\put(-199,120){\small $d$}
\put(-184,142){\small $e$}
\put(-195,158){\small $f$}
\end{picture}
\medskip \medskip
\end{center}
\caption{weak rectangular diagram and arc presentation. (i) weak rectangular diagram. (ii) Three-dimensional illustration of weak rectangular diagram. (iii) Arc presentation. Here $a,b,c,d,e,f$ indicates the correspondence of the pages (half planes)} 
\label{fig:weak}
\end{figure} 

Our crucial but easy observation is that, for a weak rectangular diagram, collapsing $xy$-plane also yields an arc presentation. With a little modification, the construction given in Section \ref{section:Arc-to-rectangle} works for weak rectangular diagrams.

\begin{theorem}
\label{theorem:weak}
If a link $L$ admits a weak rectangular diagram $D$ having $n$ horizontal segments, then $D$ admits an arc presentation with arc number $n$.
\end{theorem}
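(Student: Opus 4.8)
The plan is to run the collapsing construction of Section \ref{section:Arc-to-rectangle} almost verbatim, the only new ingredient being that the vertical segments are allowed to bulge to \emph{either} side of the $xy$-plane according to whether they are over or under segments. Concretely, I would first realize $D$ as an honest link $\widetilde{L}\subset\R^{3}$ projecting to $D$ with the correct crossing information: place every horizontal segment on the plane $H=\{z=0\}$, arranged at pairwise distinct heights, and then isotope each vertical segment so that it meets $H$ only at its two endpoints while its interior is pushed into $\{z>0\}$ if it is an over segment and into $\{z<0\}$ if it is an under segment. Since an over (resp.\ under) segment must pass above (resp.\ below) every horizontal segment it meets, this $\widetilde{L}$ genuinely projects to $D$ and is therefore isotopic to $L$.

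Next I would collapse $H$ onto the binding. In place of the map $f^{-1}$ of Section \ref{section:Arc-to-rectangle} I would use $G(x,y,z)=(z\cos x,\,z\sin x,\,y)$, now allowing $z$ to take both signs. This $G$ crushes $\{z=0\}$ to the $z$-axis (the binding), sends the half-plane $\{x=c,\ z>0\}$ to the page $H_{c}$, and sends the half-plane $\{x=c,\ z<0\}$ to the opposite page $H_{c+\pi}$. Consequently each over segment is carried to an arc properly embedded in a ``front'' page $H_{c}$, each under segment to an arc in a ``back'' page $H_{c+\pi}$, and each horizontal segment is crushed to a single binding point. It is essential here that the vertical segments, not the horizontal ones, become the page arcs: a horizontal segment may carry both over- and under-crossings, and under $G$ such a mixed segment would be split between a front and a back page, violating the arc-presentation condition, whereas the hypothesis that each vertical segment is purely over or purely under is exactly what keeps each of them inside a single page. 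After rescaling the $x$-coordinates of the vertical segments into a short interval about $0$ (so the front pages $H_{c}$ and the back pages $H_{c+\pi}$ never coincide) and using that the horizontal segments lie at distinct heights, every page contains at most one arc. The arcs of the resulting presentation are then in bijection with the vertical segments and the binding points with the horizontal segments; since a weak rectangular diagram is an alternating cyclic word in horizontal and vertical segments, these two counts agree, and the arc number is exactly $n$. This is the configuration drawn in Figure \ref{fig:weak}, where the over and under segments occupy the two sides of the binding.

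The step I expect to be the crux is verifying that this collapse does not change the link type, i.e.\ that the arc presentation produced by $G$ really represents $L$ and not some crossing-changed relative of it. For a genuine rectangular diagram this is precisely the content of Section \ref{section:Arc-to-rectangle}, so the point to check is that the two-sided modification is harmless. The key observation should be that the over segments and the under segments sit strictly on opposite sides of $H$, so that contracting each horizontal segment toward the binding inside $H$ can be realized by an ambient isotopy whose track never meets the interiors of the vertical segments; equivalently, $G$ restricted to a neighborhood of $\widetilde{L}$ extends to an ambient homeomorphism of $\R^{3}$ that preserves the link type. Making this isotopy explicit is the only place where a little care is needed, and everything else is bookkeeping inherited from the standard construction.
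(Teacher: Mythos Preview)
Your argument is correct and is essentially the paper's own proof: you realize the diagram with horizontal arcs in $\{z=0\}$ and push each vertical arc into $\{z>0\}$ or $\{z<0\}$ according to its over/under type, then collapse the $xy$-plane to the binding so that the two signs of $z$ land on opposite pages. The paper packages the collapse as a composition $f^{-1}\circ F$ with $F(\theta,z,r)=((\mathsf{sgn}\,r)\theta,z,|r|)$ sending the $z<0$ half-planes to pages $H_{-c}$ rather than your $H_{c+\pi}$, but this is only a cosmetic difference, and your extra paragraph on why the collapse is an ambient isotopy is more justification than the paper itself provides.
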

\begin{proof}
As in Section \ref{section:Arc-to-rectangle}, for a link $L$ with weak rectangular diagram $D$ by abuse of notation, by horizontal/vertical arc of a link $L$, we mean a part of link that projects to a horizontal/vertical arc in the rectangular diagram. 

We put $L$ so that each horizontal arc sits on the $xy$-plane $H=\{(x,y,z) \in \R^{3} \: | \: z=0 \}$.
Furthermore, we put each over vertical arc so that it sits on the upper half-plane $H^{+}_{\theta}=\{(x,y,z) \in \R^{3} \: | \: x=\theta, z\geq 0 \}$ and put each under vertical arcs so that it sits on the lower half-plane $H^{-}_{\theta}=\{(x,y,z) \in \R^{3} \: | \: x=\theta, z\geq 0 \}$ for some $\theta \in (0,\pi)$ (see Figure \ref{fig:weak} (ii)). With no loss of generality, we may assume that $H^{+}_{\theta} \cup H^{-}_{\theta}$ contains at most one vertical arc segment.

Then as in the case of rectangular diagram, by collapsing the $xy$-plane $H$ into the binding, we get an arc presentation. More precisely, let $F$ be the homeomorphism
\[ F: (0,\pi) \times \R \times (\R_{>0} \cup \R_{<0}) \rightarrow ((-\pi,0) \cup (0,\pi)) \times \R \times \R_{>0}, \quad F(\theta,z,r) = ((\mathsf{sgn}\,z)\theta,z,|r|)\]
Then $f \circ F (L)$ gives rise to an arc presentation. (see Figure \ref{fig:weak} (iii))
\end{proof}

In particular, we get the fourth equivalent definition of arc index.

\begin{corollary}
\[ \alpha(L)=\min \{ \alpha(R) \: | \: R \textnormal{ is a weak rectangular diagram of } L \}\]
\end{corollary}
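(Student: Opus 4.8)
The plan is to prove the two inequalities $\alpha(L) \le \min_R \alpha(R)$ and $\min_R \alpha(R) \le \alpha(L)$, where in both cases the minimum ranges over all weak rectangular diagrams $R$ of $L$; combining them gives the stated equality.

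First I would dispatch the easy inequality $\min_R \alpha(R) \le \alpha(L)$ by observing that every honest rectangular diagram is in particular a weak rectangular diagram. Indeed, in a rectangular diagram (Definition \ref{definition:rectangular}) each vertical segment is an over-arc at every crossing it meets, hence contains only over-crossings, so the defining condition of Definition \ref{definition:weak} is satisfied. Since the class of weak rectangular diagrams of $L$ thus contains the class of rectangular diagrams of $L$, the minimum of $\alpha(R)$ taken over the larger class is no larger than that taken over the smaller class, and by the rectangular-diagram characterization of the arc index this minimum is at most $\alpha(L)$.

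Next, for the reverse inequality $\alpha(L) \le \min_R \alpha(R)$, I would take an arbitrary weak rectangular diagram $R$ of $L$. A preliminary bookkeeping point to record is that in any weak rectangular diagram the horizontal and vertical segments alternate around each component: after merging any collinear consecutive segments, every endpoint of a horizontal segment meets a vertical segment and vice versa. Consequently the number of horizontal segments equals the number of vertical segments, namely $\alpha(R)$. Then Theorem \ref{theorem:weak}, applied with $n = \alpha(R)$, produces an arc presentation of $L$ with arc number $\alpha(R)$. By the definition of $\alpha(L)$ as the minimum arc number over all arc presentations, this gives $\alpha(L) \le \alpha(R)$, and taking the minimum over all weak rectangular diagrams $R$ yields $\alpha(L) \le \min_R \alpha(R)$.

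I do not expect any genuinely hard step here, since Theorem \ref{theorem:weak} already supplies the geometric conversion from a weak rectangular diagram to an arc presentation. The only point requiring a moment's care is the bookkeeping identity equating the horizontal and vertical segment counts, which is exactly what is needed to match the arc number $n$ appearing in Theorem \ref{theorem:weak} (phrased in terms of horizontal segments) with the quantity $\alpha(R)$ (defined in terms of vertical segments).
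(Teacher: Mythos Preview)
Your proof is correct and is exactly the argument the paper has in mind: the corollary is stated without proof, immediately after Theorem~\ref{theorem:weak}, as ``In particular, we get the fourth equivalent definition of arc index,'' so the intended derivation is precisely the two-inequality argument you wrote out. Your observation that the horizontal and vertical segment counts agree is the right way to reconcile the wording of Theorem~\ref{theorem:weak} (horizontal segments) with the definition of $\alpha(R)$ (vertical segments); indeed, inspecting the proof of Theorem~\ref{theorem:weak} shows that it is the vertical arcs that become the arcs of the arc presentation, so the two counts must coincide for the theorem to be consistent.
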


\section{From multi-crossing diagram to weak rectangular diagram}

For non-negative integers $p,q$ with even $p+q$, 
a \emph{$(p,q)$-tangle} is a proper embedding of the disjoint union of $\frac{p+q}{2}$ intervals into $[0,1] \times \R^{2}$ such that $p$ endpoints lie on $\{0\} \times \R^{2}$ and $q$ endpoints lie on $\{1\} \times \R^{2}$.

We remark that in our definition, we do not allow a $(p,q)$-tangle to contain a closed component.

The notion of multi-crossing diagram, rectangular diagram and weak rectangular diagram of tangles are defined in a similar manner. We denote by $\alpha(R)$ the number of vertical segments of a weak rectangular diagram $R$ of a $(p,q)$-tangle diagram.

\begin{definition}
An \emph{elementary $n$-crossing $(p,q)$-tangle} is a connected $(p,q)$-tangle that admits a multi-crossing diagram having exactly one $n$-crossing. We call such a diagram an \emph{elementary tangle diagram} (see Figure \ref{fig:slice-position} (i)).
\end{definition}

\begin{definition}[Sliced position]
\label{definition:sliced-position}
We say that a multi-crossing diagram $D$ is in a sliced position if it satisfies the following conditions (see Figure \ref{fig:slice-position} (ii)).

\begin{itemize}
\item[(a)] the $x$-coordinates of crossing points are $\{\frac{1}{2},\frac{3}{2},\ldots,c-\frac{1}{2}\}$, where $c=c_{tot}(D)$.
\item[(b)] For each $i=1,\ldots,c$, the slice $D_i=D \cap \{(x,y) \in \R^{2} \: | \: i-1 \leq x \leq i\}$ is a tangle diagram which is a disjoint union of the trivial $(1,1)$-tangles and an elementary tangle diagram.
\item[(c)] The trivial $(1,1)$-tangles in each $D_{i}$ are horizontal lines.
\end{itemize}
We call $D_i$ the \emph{($i$-th) slice} of a multi-crossing diagram $D$ in a sliced position.
\end{definition}

Every multi-crossing diagram is put into a sliced position as follows.
First we put $D$ so that the $x$-coordinates of its crossing points satisfy the condition (a). Then we move $D$ so that the geometric intersection number of $D$ and the lines $x=i$ $(i=0,\ldots,c)$ are minimum to achieve (b). Finally, by adjusting the diagram, we achieve (c).

\begin{definition}[Type $0$ slice]
We say that the slice $D_i$ is \emph{of type $0$} if the elementary tangle in $D_i$ is $(0,q)$-tangle or the $(p,0)$-tangle.
\end{definition}

\begin{figure}[htbp]
\begin{center}
\includegraphics*[width=90mm]{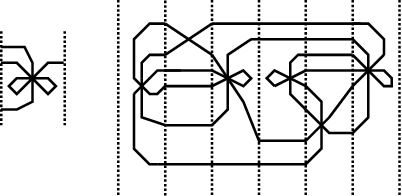}
\begin{picture}(0,0)
\put(-280,120){(i)}
\put(-202,120){(ii)}
\put(-175,0){$D_1$}
\put(-143,0){$D_2$}
\put(-115,0){$D_3$}
\put(-85,0){$D_4$}
\put(-55,0){$D_5$}
\put(-25,0){$D_6$}
\end{picture}
\end{center}
\caption{(i) Elementary $4$-crossing $(3,1)$-tangle and its elementary diagram (ii) Multi-crossing diagram in a sliced position (the height information of multi-crossings are omitted). The slice $D_1,D_4,D_6$ are of type $0$.} 
\label{fig:slice-position}
\end{figure}

Clearly, $D_1$ and $D_{c}$ must be of type $0$. We show that one can adjust the sliced position so that the other slices are not of type $0$.

\begin{proposition}(c.f. \cite[Theorem 1]{noy})
\label{prop:slice-reduced}
If a multi-crossing diagram $D$ is non-split, then it admits a sliced position such that its $i$-th slice $D_i$ is not of type $0$ for $i \neq 1,c_{tot}(D)$. 
\end{proposition}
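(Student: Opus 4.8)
My plan is to recast the type-$0$ condition as a statement about local extrema of the $x$-coordinate and then to remove the internal type-$0$ slices one at a time. In a sliced position every arc of $D$ running between two consecutive crossings is monotone in $x$ (it leaves one crossing, runs across intermediate slices as horizontal trivial strands, and enters the next crossing), so each arc-end at a crossing $X_i$ points either strictly to the left or strictly to the right. Writing $L_i$ and $R_i$ for the numbers of left- and right-pointing ends at $X_i$, the elementary tangle of $D_i$ is an $(L_i,R_i)$-tangle, and $D_i$ is of type $0$ exactly when $L_i=0$ (call $X_i$ a \emph{source}) or $R_i=0$ (a \emph{sink}). Since $D$ lies in $\{0\le x\le c\}$ and an arc can turn back only at a crossing, a left-pointing end must join a crossing of smaller $x$; hence the leftmost crossing has $L=0$ and the rightmost has $R=0$, which is precisely why $D_1$ and $D_{c}$ are forced to be type $0$. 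Because $x\mapsto -x$ interchanges sources and sinks, it suffices to arrange that no internal crossing is a source, the sink case then following by reflection.

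I would argue by induction on the number of internal type-$0$ slices. Suppose $D_i$ with $1<i<c$ is a source, so $L_i=0$ and every crossing joined to $X_i$ by an arc lies to its right; moreover the strands meeting the line $x=i-1$ are all trivial strands passing over $D_i$, and non-splitness forces $m_{i-1}\ge 1$, so at least one such strand connects the part of $D$ left of $x=i-1$ to the part right of it. The reduction I have in mind is to slide a suitably chosen right-neighbour $Y$ of $X_i$ across $X_i$ to its left by an ambient isotopy of the plane: this converts the arc $X_iY$ into a left-pointing end at $X_i$, so $X_i$ ceases to be a source. Connectedness is what should guarantee that $Y$ can be chosen so that, in its new position, $Y$ still has an arc running further to the left (hence does not itself become a new internal source) and that no other slice is pushed into type $0$.

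The delicate point, and where I expect the real work to lie, is in making this sliding move legitimate and genuinely reductive. It is essential to use the planarity of $D$: as an abstract $4$-valent multigraph a connected diagram need not admit any vertex order avoiding internal sources and sinks, since a crossing all of whose arcs return into a single \emph{petal} is forced to be a local extremum; but such petal configurations cannot be drawn in the plane around one multi-crossing without the petals meeting, which is exactly what rescues the statement, so any argument forgetting the embedding is doomed. Concretely I would locate $Y$ by an innermost-face argument inside the pocket to the left of the source $X_i$: the face of $D$ bordering $X_i$ on the left is bounded by passing strands and by arcs of $X_i$, and its planar structure should exhibit a neighbour of $X_i$ that can be swung around $X_i$ through that face without colliding with other strands (so the move is an isotopy creating no new crossings) and without all of its arcs then pointing rightward. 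Verifying that such a choice is always available and that the induction strictly decreases the number of internal type-$0$ slices is the crux; granting it, the two forced extrema $D_1,D_{c}$ yield the proposition.
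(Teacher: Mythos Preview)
Your high-level plan matches the paper's: recast type~$0$ as a local-extremum condition on the $x$-coordinate, use non-splitness to guarantee passing strands through an internal source slice, and reorder crossings to give the source a left-pointing end, proceeding by induction. You also correctly flag that planarity is indispensable. The gap is in the mechanism. You propose to slide a \emph{single} neighbour $Y$ of the source $X_i$ to its left, hoping an innermost-face argument will produce a $Y$ for which the slide is a planar isotopy and $Y$ does not itself become a new source; you then explicitly grant this and move on. Neither property follows from inspecting one face. A neighbour $Y$ may sit several slices to the right of $X_i$ with other crossings between them, so dragging $Y$ alone past those crossings (its remaining arcs trailing behind) is not in general an isotopy of multi-crossing diagrams; and even when it is, $Y$'s remaining ends may all still point right, so you have only relocated the internal source rather than removed one.

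The paper handles both obstructions by moving a packet of crossings simultaneously. It fixes a passing strand $e$ in $D_i$, takes the smallest $j$ for which the component of the source in $D_{[i,j]}=D_i\cup\cdots\cup D_j$ reaches $e$, and singles out the planar region $R$ trapped between the source, $e$, and the rest of $D_{[i,j]}$. It then reorders \emph{all} the crossings $c_1,\dots,c_m$ lying on $\partial R$ to the left of $X_i$, in reversed order, leaving the remaining slices of $D_{[i,j]}$ (including $D_i$ itself) after them in their original order. That the crossings moved are exactly those on the boundary of a single planar region is what certifies the swap is realised by a planar isotopy and creates no new type~$0$ slice: after the move the former source now abuts $e$ on its left. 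Your innermost-face intuition is pointing at this region, but the correct fix is a multi-crossing reversal along its whole boundary, not a single slide; without supplying that step your induction does not go through.
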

\begin{proof}
Assume that the $i$-th slice $D_i$ is of type $0$, and that the elementary tangle $T$ in $D_i$ is a $(0,q)$-tangle for $i \neq 1, c_{tot}(D)$. The case where the elementary tangle is $(p,0)$-tangle case is similar.

We show that by suitably permuting the order of slices (i.e., swapping the $x$-coordinates of crossings), we are able to convert $D_i$ to a slice which is no longer of type $0$, without introducing any slices of type $0$. 

Since $i \neq 1$ and $D$ is non-split, the slice $D_i$ also contains at least one (actually, at least two) trivial $(1,1)$-tangle(s).
The non-splitness of $D$ also implies that there is $i\leq j \leq c$ such that in the tangle diagram $D_{[i,j]}:=D_i \cup D_{i+1} \cup \cdots \cup D_{j}$, the connected component that contains $T$ also contains a trivial $(1,1)$-tangle $e$ in $D_{i}$. Among such $j$, we take the smallest one. 

By changing the order of slices (i.e. by pushing the crossing to the left-hand side if necessary), we may assume that in the tangle diagram $D_{[i,j]}$, the strand of the $(1,1)$-tangle $e$ does not encounter crossings until $D_{j}$ (see Figure \ref{fig:slice-position-reduced}).

Let $\overline{R}$ be the closure of a connected component of $([i,j] \times \R) \setminus D_{[i,j]}$ that contains both the $n$-crossing point of $D_i$ and the (1,1)-tangle $e$. We put $R = \overline{R} \cap [i+1,j] \times \R$.
 
Let $c_{1},\ldots, c_{m}$ be the crossing points that lie on $\partial R$. 
We assume that $c_i$ lies in the slice $D_{k_i}$. We put $\{i,i+1,\ldots,j \} = \{k_1,k_2,\ldots,k_m,\ell_1,\ell_2,\ldots,\ell_{j-i-m}\}$, where
\[ k_1 < k_2 < \cdots< k_m=j, i= \ell_1<\ell_2 < \cdots < \ell_{j-i-m}. \]
Then we reorder the slices $D_{i},D_{i+1},\ldots,D_{j}$ as
\[ D_{k_{m}} < D_{k_{m-1}} < \cdots < D_{k_1} < D_{\ell_1} < \cdots  < D_{\ell_{j-i-m}}.\]
This operation removes type $0$ slice $D_i$ without introducing new type $0$ slices (see Figure \ref{fig:slice-position-reduced}).

\begin{figure}[htbp]
\begin{center}
\includegraphics*[width=80mm]{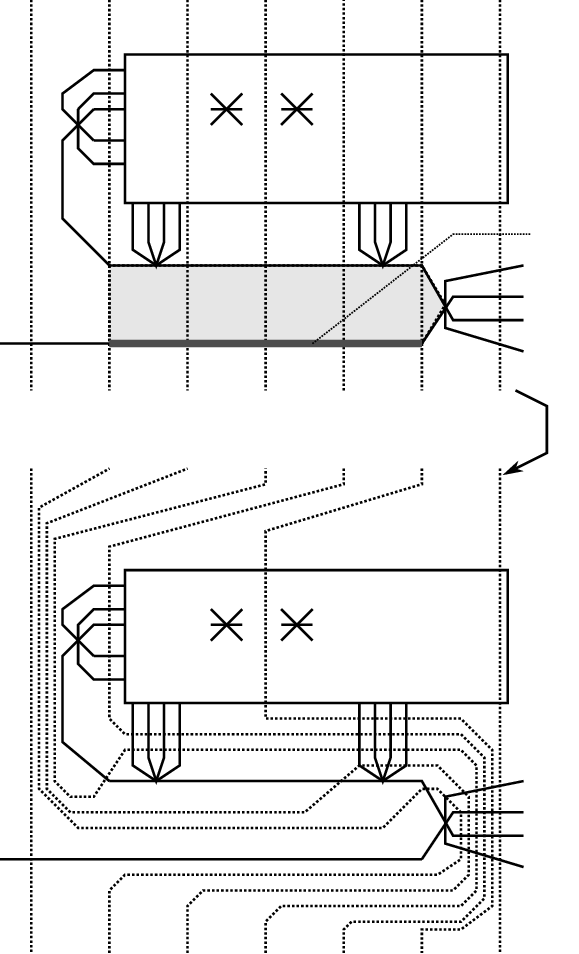}
\begin{picture}(0,0)
\put(-210,242){$e$}
\put(-215,342){$T$}
\put(-150,250){\Large $R$}
\put(-20,285){contains}
\put(-22,275){ no crossings}
\put(-10,205){swapping}
\put(-10,195){$x$-coordinates}
\put(-12,185){ of crossings}
\put(-210,220){$D_i$}
\put(-183,220){$D_{i+1}$}
\put(-183,205){($D_{k_1})$}
\put(-153,220){$D_{i+2}$}
\put(-122,220){$D_{i+3}$}
\put(-92,220){$D_{i+4}$}
\put(-90,205){($D_{k_2})$}
\put(-60,220){$D_{j}$}
\put(-60,205){($D_{k_3})$}
\put(-210,0){$D_{k_3}$}
\put(-183,0){$D_{k_2}$}
\put(-153,0){$D_{k_1}$}
\put(-122,0){$D_{i}$}
\put(-92,0){$D_{i+2}$}
\put(-60,0){$D_{i+3}$}
\end{picture}
\end{center}
\caption{Removing type $0$ slices. We illustrate the effect of changing the $x$-coordinates by drawing how the line $x=a$ changes.} 
\label{fig:slice-position-reduced}
\end{figure} 
\end{proof}

\begin{example}
The proof of Proposition \ref{prop:slice-reduced} gives an algorithm to convert a sliced position into a slice position without non-extremal type $0$ slices 

The mutli-crossing diagram in Figure \ref{fig:slice-position} (ii) have a non-extremal type $0$ slice $D_4$. Following the proof, we remove this type $0$ slice as shown in Figure \ref{fig:slice-position-example}.
\end{example}

\begin{figure}[htbp]
\begin{center}
\includegraphics*[width=105mm]{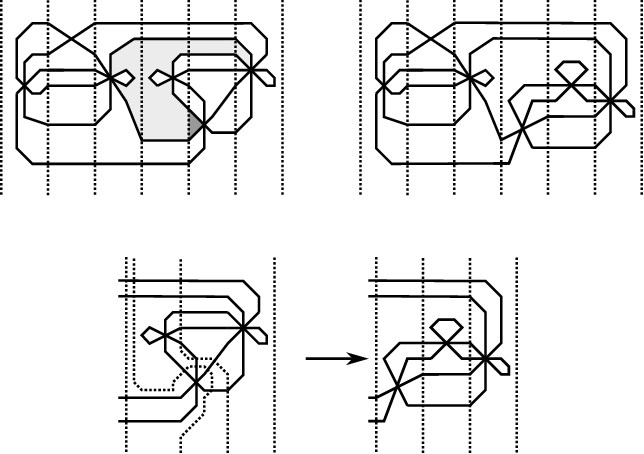}
\begin{picture}(0,0)
\put(-320,210){(i)}
\put(-160,210){(ii)}
\put(-265,100){(iii)}
\put(-232,120){$D_4$}
\put(-210,120){$D_5$}
\put(-228,210){\small $x\!=\!4$}
\put(-202,210){\small $x\!=\!5$}
\end{picture}
\end{center}
\caption{Removing type $0$ slices, example. (i) $D_4$ is a type $0$ slice. The gray region represents $\overline{R}$ and the dark gray region represents $R$. (ii) After the modification. (iii) How the modification is done. We interchange the order of $D_4$ and $D_5$ (as in Figure \ref{fig:slice-position-reduced}, we first illustrate how the line $x=4,5$ changes, then we make lines straight to get a sliced position.} 
\label{fig:slice-position-example}
\end{figure}

Motivated by Proposition \ref{prop:slice-reduced}, we look at the complexity of basic building blocks of a sliced position, a slice which is not of type $0$.
 
\begin{definition}
\label{definition:constant}
For an elementary tangle $T$, we denote by $\alpha(T)$ the minimum number of vertical segments of its weak rectangular diagrams. We define
\[ u_n=\max \{\alpha(T)\: |\: T \mbox{ is an elementary }n\mbox{-crossing tangle which is not of type } 0\}\]
\end{definition}

\begin{example}[$u_2=1$]
Since an elementary $2$-crossing $(1,1)$-tangle is trivial, to compute $u_2$, we need to consider $T_1,\ldots,T_6$ of elementary $2$-crossing tangles. By writing these tangles as a weak rectangular diagram, we confirm that $u_2=1$ (see Figure \ref{fig:u2}).

We remark that when we use a regular rectangular diagram to represent the tangle $T_5,T_6$ then they require at least two vertical arcs. Thus it is crucial to use weak rectangular diagram to get a better (tight) bound. Also, it is clear that a type $0$ elementary $2$-tangle $T$ requires at least two vertical arcs so it is again crucial to exclude type $0$ cases.
\end{example}

\begin{figure}[htbp]
\begin{center}
\includegraphics*[width=90mm]{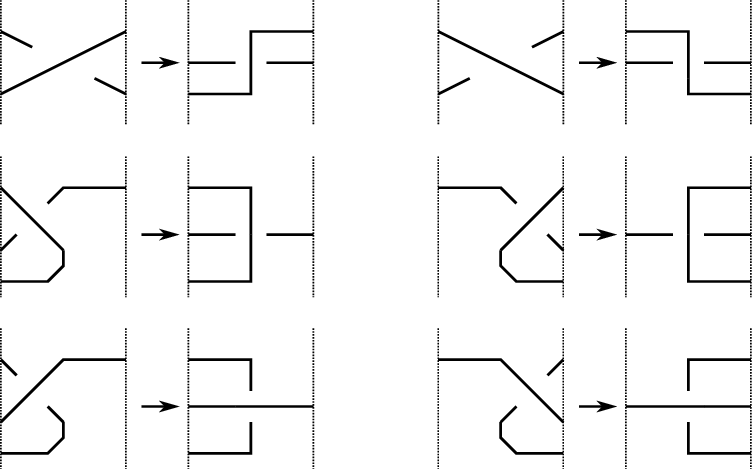}
\begin{picture}(0,0)
\put(-275,135){$T_1$}
\put(-128,135){$T_2$}
\put(-275,75){$T_3$}
\put(-128,75){$T_4$}
\put(-275,15){$T_5$}
\put(-128,15){$T_6$}
\end{picture}
\end{center}
\caption{$u_2=1$. } 
\label{fig:u2}
\end{figure} 

\begin{example}[$u_3=2$]
To determine $u_3$, first we note that an elementary $3$-crossing $(p,q)$-tangle can be written as a $2$-crossing tangle diagram having st most two $2$ crossings if $p+q <6$ (i.e. if some endpoints of strands at the 3-crossing point are connected to form a monogon) (see Figure \ref{fig:u3} (i)). Thus when $p+q \neq 6$ then it is represented by a weak rectangular diagram with at most two vertical arcs. 

For an elementary $3$-crossing $(p,q)$-tangle $T$ with $p+q=6$ (and $p,q \neq 0$), since $p,q \neq 0$, there is at least one strand connecting the left side and the right side. If the height of this strand is the middle, then one can check that $T$ is represented by a weak rectangular diagram with two vertical arcs  (see Figure \ref{fig:u3} (ii)).

Finally, the case where the middle-height strand connects the same side since $p,q \neq 0$, one can check that $T$ is represented by a weak rectangular diagram with two vertical arcs (see Figure \ref{fig:u3} (iii)).

Thus we conclude $u_3=2$.
\end{example}

\begin{figure}[htbp]
\begin{center}
\includegraphics*[width=70mm]{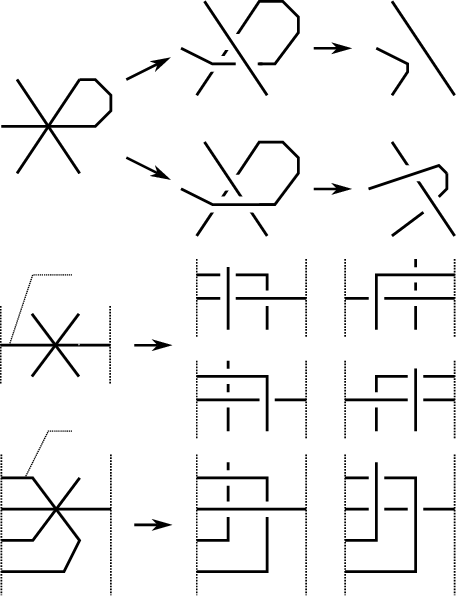}
\begin{picture}(0,0)
\put(-220,255){(i)}
\put(-220,140){(ii)}
\put(-170,138){\small middle}
\put(-220,75){(iii)}
\put(-170,68){\small middle}
\end{picture}
\end{center}
\caption{$u_3=2$. (i) $(p,q)$-tangle with $p+q \neq 6$ admits a 3-crossing (usual) diagram with at most two crossings. (ii) The case where the middle-height strand connects two different sides. (iii) The case where the middle-height strand connects the same side.} 
\label{fig:u3}
\end{figure}

Having computed $u_2$ and $u_3$, we now estimate $u_n$ for general $n$.

\begin{proposition}
\label{prop:un-bound}
For $n>2$, $u_n \leq 2n-4$.
\end{proposition}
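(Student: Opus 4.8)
The plan is to produce, for every elementary non-type-$0$ $n$-crossing tangle $T$, an explicit weak rectangular diagram with at most $2n-4$ vertical segments; since $\alpha(T)$ is by Definition \ref{definition:constant} the minimum number of vertical segments over all weak rectangular diagrams of $T$, and $u_n$ is the maximum of $\alpha(T)$ over such $T$, this yields $u_n \le 2n-4$. I would organize the construction around the height order $1 < 2 < \cdots < n$ of the $n$ strands at the crossing, where height $1$ is the under-most sheet and height $n$ the over-most. The guiding principle is that by Definition \ref{definition:weak} a vertical segment in a weak rectangular diagram must be \emph{pure}, i.e.\ either all over or all under, so a strand that has other strands both above and below it at the crossing cannot be realized as a single vertical. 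This is exactly why the top and bottom strands will be cheap and the interior strands expensive, and it is the same phenomenon already visible in the computation of $u_3$.

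Concretely, I would realize strand $n$ (which passes over every other strand) and strand $1$ (which passes under every other strand) as horizontal arcs, contributing no vertical segments: wherever the top horizontal crosses another strand it must go over, which forces the crossed vertical to be an \emph{under} segment there, and dually for the bottom horizontal. Each interior strand $k$ with $2 \le k \le n-1$ is split into two vertical pieces joined by a horizontal piece: an \emph{over} vertical $O_k$ carrying its crossings with the $k-1$ lower strands, and an \emph{under} vertical $U_k$ carrying its crossings with the $n-k$ higher strands. This uses $2$ vertical segments for each of the $n-2$ interior strands, for a total of $2(n-2)=2n-4$, and it specializes correctly to the base case $u_3=2$ (where strand $2$ is the only interior strand).

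The step I expect to be the main obstacle is assembling these $2(n-2)$ vertical pieces into a single globally coherent weak rectangular diagram. For each pair $k<m$ the two strands must cross \emph{exactly once}, with $m$ over $k$; this crossing can be placed either on $O_m$ (meeting the horizontal part of strand $k$) or on $U_k$ (meeting the horizontal part of strand $m$), and I must choose one placement per pair so that (i) no pair is crossed twice, (ii) every over/under label agrees with the heights, and (iii) the over-piece, under-piece and connecting horizontal of each interior strand join into one embedded arc whose two ends reach the boundary. I would do this with an explicit ``staircase'' placement of the columns $O_2,\dots,O_{n-1},U_2,\dots,U_{n-1}$, after which (i)--(iii) reduce to a direct combinatorial check on the resulting $X/O$ positions. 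The diagram then has exactly $2n-4$ vertical segments, giving the desired bound on $\alpha(T)$.

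Finally I would reduce the general tangle to this ``all endpoints on the boundary'' model. If some ends at the crossing are joined by a cap, i.e.\ $p+q<2n$ so that some strand forms a monogon at the $n$-crossing, the crossing simplifies to an $m$-crossing tangle with $m<n$, exactly as in the treatment of the case $p+q\ne 6$ used for $u_3$. Since $2m-4 \le 2n-6$, there is slack of at least $2$ to absorb the extra connecting arcs, so these capped configurations are never worse than the non-capped one. The worst case is therefore the non-capped configuration built above, which completes the bound $u_n \le 2n-4$.
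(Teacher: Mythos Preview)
Your approach is a direct construction, whereas the paper argues by induction on $n$: remove the topmost strand to obtain an elementary $(n-1)$-crossing tangle $T'$, invoke the inductive hypothesis (with a separate argument when $T'$ happens to be of type~$0$), and then re-insert the topmost strand as an arc lying entirely above a weak rectangular diagram $R'$ of $T'$; since it is the topmost strand it can be routed over $R'$ using at most two additional vertical segments regardless of where its two endpoints sit, giving $u_n\le u_{n-1}+2\le 2n-4$.

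Your construction has a concrete gap at the very first step. You make both the height-$n$ strand and the height-$1$ strand into purely horizontal arcs. But two horizontal arcs in a (weak) rectangular diagram lie on lines $y=c_1$ and $y=c_2$ and therefore never meet transversally, whereas at an $n$-crossing every pair of strands must cross exactly once. The single crossing between the top and bottom strands is thus simply absent from your diagram; your own placement rule (``on $O_m$ or on $U_k$'') offers no option for the pair $(1,n)$, since neither strand carries a vertical. As soon as you give one of the two extreme strands a vertical segment to create that crossing, the total rises to at least $2(n-2)+1=2n-3$, and you would need a further saving elsewhere that the proposal does not provide. There is also a boundary issue you do not address: the tangle $T$ prescribes on which side each endpoint lies, and if, for instance, both half-arcs of the topmost strand exit on the left then that strand is a cap $\supset$ and cannot be purely horizontal even before one worries about its crossing with strand $1$. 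The paper's inductive step avoids both difficulties because the single re-inserted top strand is allotted up to two vertical segments, enough to route its endpoints to either side and to pass over everything below it.
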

\begin{proof}
We prove the proposition by induction on $n$. We already mentioned that $u_3=2$.
To see general cases, we view a general elementary $n$-crossing tangle $T$ as obtained by starting with an elementary $(n-1)$-crossing tangle $T'$ and adding the topmost strand, concatenating its endpoint if necessary.

When $T'$ is of type $0$, we get a weak rectangular diagram of $T$ by adding a horizontal line to a weak rectangular diagram of $T'$. An elementary $(n-1)$-crossing tangle $T'$ of type $0$ can be seen as a concatenation of the cap $\subset$ or $\supset$ (or, both) and an elementary $(n-1)$-crossing tangle $T''$ which is not type $0$.
Since caps $\subset$ and $\supset$ are written as a rectangular diagram having one vertical segment, it follows that $T'$, hence $T$, is represented by a weak rectangular diagram with at most $u_{n-1}+2 \leq 2n-4$ vertical arc segments.

Thus we assume that $T'$ is not of type $0$. Let $R'$ be a weak rectangular diagram of $T'$. Then we get a weak rectangular diagram $R$ of $T$ by putting the topmost arc so that it is away from the weak rectangular diagram $R'$ so that the added arc has at most two vertical segments. Thus $T$ is represented by a weak rectangular diagram having at most $u_{n-1}+2 \leq 2n-4$ vertical arc segments (see Figure \ref{fig:un}).

\end{proof}
\begin{figure}[htbp]
\begin{center}
\includegraphics*[width=120mm]{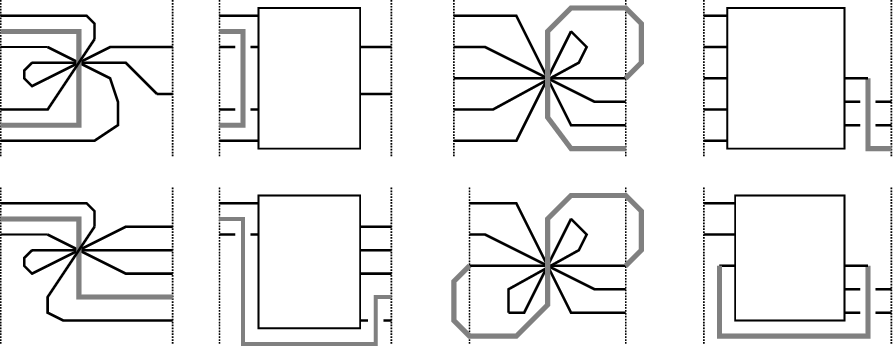}
\begin{picture}(0,0)
\put(-232,100){\large $R'$}
\put(-232,30){\large $R'$}
\put(-52,100){\large $R'$}
\put(-52,30){\large $R'$}
\end{picture}
\end{center}
\caption{The topmost strand (gray arc) can be added so that it is disjoint from a weak rentangular diagram $R'$ and that it has at most two vertical segments.} 
\label{fig:un}
\end{figure}

%

\begin{proof}[Proof of Theorem \ref{theorem:main}]
By Proposition \ref{prop:slice-reduced} we put $D$ so that each slice $D_i$ is not of type $0$, except $D_1$ and $D_{c_{tot}(D)}$.
We view $D_1$ and $D_{c_{tot}(D)}$ as a concatenation of the caps $\subset$, $\supset$ and elementary tangles which are not of type $0$.
 
Since caps $\subset$ and $\supset$ are written as a rectangular diagram having one vertical segment, we conclude that $L$ is written as a weak rectangular diagram having at most $2+\sum_{n\geq 2}c_n(D)u_n$ vertical arcs. Therefore by Theorem \ref{theorem:weak} and Proposition \ref{prop:un-bound}, $\alpha(L)-2 \leq c_2(D)+ \sum_{n>2}(2n-4)c_n(D)$.
\end{proof}

\begin{remark}
We noted that our argument can be seen as a generalization of arguments in \cite{noy} where they introduced a notion of bisected vertex labeling of plane graphs and gave a proof of the inequality $\alpha(L) \leq c(L)+2$. Our method of changing a weak rectangular diagram into an arc presentation appeared in \cite[Section 4]{noy} as a special case. Also, the notion of bisected labeling in \cite{noy} is essentially the same as a special position which we showed in Proposition \ref{prop:slice-reduced}.
\end{remark}

\section*{Acknowledgements}
The author is partially supported by JSPS KAKENHI Grant Numbers 21H04428 and 23K03110. 
The author would like to thank the referee for a careful reading and valuable comments that improve the exposition.

\end{document}